\documentclass[12pt,a4]{article}
\usepackage{amsmath, graphicx}
\usepackage{amstext,amsfonts,amsbsy,eucal,amssymb, amsthm}
\usepackage{color}

\topmargin 0pt
\advance \topmargin by -\headheight
\advance \topmargin by -\headsep
     
\textheight 8.9in
     
\oddsidemargin 0pt
\evensidemargin \oddsidemargin
\marginparwidth 0.5in
     
\textwidth 6.5in

\newtheorem{theorem}{Theorem}[section]
\newtheorem{proposition}[theorem]{Proposition}
\newtheorem{corollary}[theorem]{Corollary}

\theoremstyle{definition}

\def\Z{\operatorname{\mathbb{Z}}}

\def\C{\operatorname{\mathbb{C}}}
\def\P{\operatorname{\mathbb{P}}}

\def\pic{\operatorname{{\rm Pic}}}
\def\H{\operatorname{{\mathcal H}}}
\def\E{\operatorname{{\mathcal E}}}

\usepackage{float}
\restylefloat{table}
\usepackage{graphicx}
\DeclareSymbolFont{extraup}{U}{zavm}{m}{n}
\DeclareMathSymbol{\varheart}{\mathalpha}{extraup}{86}
\DeclareMathSymbol{\vardiamond}{\mathalpha}{extraup}{87}

\DeclareFontFamily{U}{rsf}{}
\DeclareFontShape{U}{rsf}{m}{n}{<5> <6> rsfs5 <7> <8> <9> rsfs7 <10-> rsfs10}{}
\DeclareMathAlphabet\Scr{U}{rsf}{m}{n}
\def\C{{\mathbb{C}}}

\def\P{{\mathbb{P}}}

\def\Z{{\mathbb{Z}}}

\def\X{{\mathcal X}}
\def\E{{\mathcal E}}

\def\H{{\mathcal H}}
\def\pic{{\rm{Pic}}}

\usepackage{graphics}
\usepackage{epsfig}
\usepackage{amsmath}
\usepackage{amssymb}
\usepackage{multirow}
\usepackage[all]{xy}

\begin{document}
\begin{center}
{\large \bf An algebraically stable variety for a four-dimensional dynamical system reduced from the lattice super-KdV equation}
\end{center}
\begin{center}

{\bf Adrian Stefan Carstea${}^\dagger$  and Tomoyuki Takenawa${}^\ddag$}\\
\medskip

${}^\dagger$ Department of Theoretical Physics, Institute of Physics and Nuclear Engineering\\ Reactorului 15, 077125, Magurele, Bucharest, Romania\\
E-mail: carstea@gmail.com\\[5pt]

${}^\ddag$ Faculty of Marine Technology, Tokyo University of Marine Science and Technology\\ 2-1-6 Etchu-jima, Koto-ku, Tokyo, 135-8533, Japan\\
E-mail: takenawa@kaiyodai.ac.jp

\end{center}

\begin{abstract}
In a prior paper the authors obtained a four-dimensional discrete integrable dynamical system by the traveling wave reduction from the lattice super-KdV equation in a case of finitely generated Grassmann algebra. The system is a coupling of a Quispel-Roberts-Thompson map and a linear map but does not satisfy the singularity confinement criterion. It was conjectured that the dynamical degree of this system grows quadratically. In this paper, constructing a rational variety where the system is lifted to an algebraically stable map and using the action of the map on the Picard lattice, we prove this conjecture. We also show that invariants can be found through the same technique.    
\end{abstract}

\section{Introduction}

In a prior paper \cite{superQRT}, applying the traveling wave reduction to the lattice super-KdV equation  \cite{fane, liu} in a case of finitely generated Grassmann algebra, the authors obtained a four-dimensional discrete integrable dynamical system 
\begin{align}\label{4d}
\varphi:&\left\{\begin{array}{rcl}
\overline{x_0}&=&x_2\\
\overline{x_1}&=&x_3\\
\overline{x_2}&=&-x_2-x_0+\dfrac{h x_2}{1-x_2}\\
\overline{x_3}&=&-x_1-x_3 + \dfrac{2-x_2+hx_3}{(1-x_2)^2}
\end{array}\right..
\end{align}
This system is a Quispel-Roberts-Thompson (QRT) map, a two dimensional map generating an automorphism of a rational elliptic surface \cite{qrt},  for variables $x_0$, $x_2$ coupled with linear equations for variables $x_1$, $x_3$ with coefficients depending on $x_2$.
This system has two invariants
\begin{align}
I_1=&-hx_0^2 - hx_0x_2 + h^2 x_0 x_2 + h x_0^2 x_2 - h x_2^2 + h x_0 x_2^2\\
\nonumber
I_2=&2 h x_0 + x_0^2 - 2 h x_0 x_1 + 2 h x_2 + x_0 x_2 - h x_1 x_2 + h^2 x_1 x_2
+2 h x_0 x_1 x_2\\& + x_2^2
 + h x_1 x_2^2 - h x_0 x_3 + h^2 x_0 x_3 + h x_0^2 x_3 - 
  2 h x_2 x_3 + 2 h x_0 x_2 x_3,
\end{align}
but does not satisfy the {\it singularity confinement criterion} proposed by Grammaticos-Ramani and their collaborators \cite{sc, rgh}. The examples of this criterion is given in the next section.

In the same paper it is observed that the dynamical degrees of \eqref{4d} grows quadratically. This phenomena is rather unusual, since as reported in \cite{Lafortune2001, Gubbiotti2018}, the dynamical degree grows in the fourth order for generic coupled systems in the form
\begin{align*}
&\left\{
\begin{array}{rcl}
\overline{x_0}&=&f_0(x_0,x_1)\\
\overline{x_1}&=&f_1(x_0,x_1)\\
\overline{x_2}&=&f_2(x_0,x_1,x_2)\\
\overline{x_3}&=&f_3(x_0,x_1,x_2,x_3)
\end{array}
\right.,
\end{align*}
where the system is a QRT map for variables $x_0$ and $x_1$, and $\overline{x_2}$ (resp. $\overline{x_3}$) depends on $x_2$ (resp. $x_3$) linearly  with coefficients depending on ``$x_0$ and $x_1$''  (resp. ``$x_0$, $x_1$ and $x_2$'').
This type of systems is also constructed by generalising the QRT maps and referred to as ``triangular'' in \cite{FK2006}.

In this paper, constructing a rational variety where System \eqref{4d} is lifted to an {\it algebraically stable} map and using the action of the map on the Picard lattice, we prove the above conjecture.
We also show that one can find invariants also using the action on the Picard group.    

In the two-dimensional case, it is known that an autonomous dynamical system defined by a birational map on a projective rational variety (or more generally K\"ahler manifold) can be lifted to either an automorphism or an algebraically stable map on a rational variety by successive blow-ups \cite{DF2001}.
Here, a birational map $\varphi$ from an $N$-dimensional rational variety $\X$ to itself is said to be {\it algebraically stable} if $(\varphi^*)^n({\mathcal D})=(\varphi^n)^*({\mathcal D})$ holds for any divisor class ${\mathcal D}$ on $\X$ and an arbitrary positive integer $n$ \cite{BK2008}. 
These notions are closely related to the notion of singularity confinement criterion.
While a dynamical system that can be lifted to automorphisms satisfies singularity confinement criterion (i.e. all the singularities are confined), a dynamical system that can be lifted only to algebraically stable map does not satisfies the criterion (i.e. there exists a singularity that is not confined).

In studies of higher dimensional dynamical systems, the role of automorphisms is replaced by pseudo-automorphisms, i.e. automorphisms except finite number of subvarieties of codimension at least two \cite{DO1988}.  
In the last decade a few authors studied how to construct algebraic varieties on the level of pseudo-automorphisms \cite{BK2008, TT2009, CT2019}. 
However, since System \eqref{4d} does not satisfy the singularity confinement criterion, it is not expected that it could be lifted to a pseudo-automorphism. To authors knowledge there are no studies (except section Section 7 of \cite{BK2008}, which studies a kind of generalisation of standard Cremona transformation) on construction of an algebraic variety, in which the original system is lifted not to a pseudo-automorphism, but rather to an algebraically stable map using blow-ups along sub-varieties of positive dimensions.
Since the varieties obtained by blow-ups possibly infinitely near depend on the order of blow-ups, this is not a straightforward but a challenging problem.

Since $I_2$ is degree $(1,1)$ for $x_1$, $x_3$, we can restrict the phase space into 3-dimensional one as
\begin{align}\label{3d_system}
\psi:&\left\{
\begin{array}{rcl}
x_0&=&x_2\\ 
x_1&=& \dfrac{I_2 - (x_0^2+x_0x_2+x_2^2)-2h(x_0-x_0x_1+x_2)-hx_1x_2(2x_0+x_2-1+h)}{h(- x_0  +  h x_0 +  x_0^2 - 2  x_2 + 2  x_0 x_2) }\\
x_2&=& -x_2 - x_0 + \dfrac{h x_2}{1-x_2}
\end{array}\right..
\end{align}
We also show that the degree of this 3-dimensional system grows quadratically as well.

\section{Algebraically stable space for the 4D system}

Let us consider System $(\ref{4d})$ on the projective space $(\P^1)^4$. 
In the following, we aim to obtain a four-dimensional rational variety by blowing-up procedure such that the birational map $(\ref{4d})$ is lifted
to an algebraically stable map on the variety.

Let $I(\varphi)$ denote the indeterminacy set of $\varphi$. 
It is known that the mapping  
$\varphi$ is algebraically stable if and only if
there does not exist a positive integer $k$ and a divisor $D$ on ${\mathcal X}$ such that 
\begin{align}
\varphi(D\setminus I(\varphi))\subset I(\varphi^k), \label{cond_AS}
\end{align}
i.e. the image of the generic part of a divisor by $\varphi$ is included in the indeterminate set of $\varphi^k$ (\cite{BK2008, Bayraktar2012}, Proposition~2.3 of \cite{CT2019}).    
See Section 2 of \cite{CT2019} for notations and related theories used here.

The notion of singularity series of dynamics studied by Grammaticos-Ramani and their collaborators is closely related to our procedure. Let us start with a hyper-plane $x_2 = 1+ \varepsilon$, where 
$\varepsilon $ is a small parameter for considering Laurent series expression, and apply $\varphi$, then  we have a ``confined'' sequence of Laurent series:
\begin{align}
\cdots \to &(x_0^{(0)},x_1^{(0)},1+\varepsilon,x_3^{(0)})
\to 
(1,x_3^{(0)}, -h \varepsilon^{-1}, (1+h x_3^{(0)})\varepsilon^{-2})\nonumber\\
\to&
(-h \varepsilon^{-1}, (1+h x_3^{(0)})\varepsilon^{-2}, h \varepsilon^{-1}, -(1+h x_3^{(0)})\varepsilon^{-2})\nonumber\\
\to&
(h \varepsilon^{-1}, -(1+h x_3^{(0)})\varepsilon^{-2}, 1,x_4^{(3)} )
\to (1,x_1^{(4)},x_0^{(0)},x_3^{(4)})\to \cdots,\label{seq1}
\end{align}
where $x_i^{(k)}$'s are complex constants and only the principal term is written for each entry and  
a hyper-surface $x_2=0$ is contracted to lower-dimensional varieties and returned to a hyper-surface $x_0=1$ after 4 steps. 
We can also find a cyclic sequence: 
\begin{align}
&(x_0^{(0)},x_1^{(0)},\varepsilon^{-1},x_3^{(0)})
\to 
(\varepsilon^{-1},x_3^{(0)}, -\varepsilon^{-1}, -x_1^{(0)}-x_3^{(0)} )\nonumber\\
\to&
(\varepsilon^{-1},-x_1^{(0)}-x_3^{(0)} ,x_0^{(0)},x_1^{(0)}) \to
 (x_0^{(0)},x_1^{(0)},\varepsilon^{-1},x_3^{(3)}) \mbox{: returned}, \label{seq2}
\end{align}
where a hyper-surface $x_2=\infty$ is contracted to lower-dimensional varieties and returned to the original  hyper-surface after 3 steps, 
and an ``anti-confined'' sequence: 
\begin{align}
\cdots \to&\left( \left(-1+ \frac{h}{(x_0^{(0)}-1)^2}\right) \varepsilon^{-1},x_1^{(-1)},x_2^{(-1)},\varepsilon^{-1}\right)\nonumber\\
\to& 
(x_0^{(0)},\varepsilon^{-1},x_2^{(0)},x_3^{(0)})
\to 
(x_2^{(0)},x_3^{(0)},x_2^{(1)},\varepsilon^{-1})\nonumber\\
\to &
\left(x_2^{(1)},\varepsilon^{-1}, x_2^{(2)} , \left(-1+\frac{h}{(x_2^{(0)}-1)^{2}}\right)\varepsilon^{-1}\right)
\to \cdots, \label{seq3}
\end{align}
where a lower dimensional variety is blown-up to a hyper-surfaces $x_1=\infty$ and contracted to a lower dimensional variety after 3 steps. 

In the following, in order to avoid anti-confined patterns, we consider ${\mathbb P}^2\times {\mathbb P}^2$ instead of $({\mathbb P}^1)^4$. Although there is a possibility that the anti-confined pattern can be resoluted by some blowing-down procedure, it is not easy to find the actual procedure on the level of coordinates.  

The coordinate system of  ${\mathbb P}^2\times {\mathbb P}^2$ is $(x_0:x_1:1,\ x_2:x_3:1)$, and thus the local coordinate systems essentially consist of $3\times 3=9$ charts: 
\begin{align*}&(x_0, x_1, x_2, x_3), (y_0, y_1, x_2, x_3), (z_0, z_1, x_2, x_3),\nonumber\\
&(x_0, x_1, y_2, y_3), (y_0, y_1, y_2, y_3), (z_0, z_1, y_2, y_3),\nonumber\\
&(x_0, x_1, z_2, z_3), (y_0, y_1, z_2, z_3), (z_0, z_1, z_2, z_3),
\end{align*}
where $y_i$'s and $z_i$'s are 
\begin{align*}
(y_i,y_{i+1})=(x_i^{-1},x_i^{-1}x_{i+1}) \mbox{ and }
(z_i,z_{i+1})=(x_i x_{i+1}^{-1},x_{i+1}^{-1})
\end{align*}
for $i=0,2$.
Then, both the cyclic sequence \eqref{seq2} and the anti-confined sequence  \eqref{seq3} starting with $x_i^{(0)}=\varepsilon^{-1}$ do not appear, but another cyclic sequence
\begin{align}
&(x_0^{(0)},x_1^{(0)},\varepsilon^{-1}, c^{(0)}\varepsilon^{-1})
\to 
(\varepsilon^{-1}, c^{(0)}\varepsilon^{-1}, -\varepsilon^{-1}, -c^{(0)}\varepsilon^{-1})\nonumber\\
\to&
(-\varepsilon^{-1}, -c^{(0)}\varepsilon^{-1}, x_0^{(0)},x_1^{(0)}) \to (x_0^{(0)},x_1^{(0)},\varepsilon^{-1}, c^{(0)}\varepsilon^{-1}) \mbox{: returned}\label{seq4}
\end{align}
appears, where $c^{(0)}$ is also a complex constant.

In order to resolute the singularity appeared in Sequences \eqref{seq1} and \eqref{seq4}, we blow up the rational variety along the sub-varieties to which some divisor is contracted to.  
For Sequences \eqref{seq1}, we have three such sub-varieties whose parametric expressions are
\begin{align*}
V_1: &(x_0,x_1,z_2,z_3)=(P, 1,0,0)\\
V_2: &(z_0,z_1,z_2,z_3)=(0, 0,0,0)\\
V_3: &(z_0,z_1,x_2,x_3)=(0,0,P, 1), 
\end{align*}
where $P$ is a $\C$-valued parameter (independent to another sub-variety's),
while for Sequences \eqref{seq4} we have a sub-variety
\begin{align*}
V_4:& (z_0,z_1,z_2,z_3)=(P, 0,P,0).
\end{align*}
That is, the subvariety $V_1$ is the Zariski closure of $\{(x_0,x_1,x_2,x_3)=(P, 1,0,0)~|~ P\in\C \}$
and $V_4$ is that of of $\{(x_0,x_1,x_2,x_3)=(P, 0,P,0)~|~ P\in\C \}$ and so forth.

Since $V_4$ includes $V_2$, we have the option of blowing-up order. 
In the two dimensional case, resolution is unique and the order is not a matter. But in the higher dimensional case, it affects sensitively to the resulting varieties. Since we only care on the level of  codimension one, the order of blow-ups does not affect the algebraical stability in some cases.   
However, the following results were obtained not in a straightforward manner but by trial and error. 
       
We can resolute the singularity around $V_1$ by the following five blowups:
\begin{align*}
C_1:&(x_0,x_1,z_2,z_3)=(1,P,0,0) \\
& \quad \leftarrow (s_1,t_1,u_1,v_1):=(x_0-1,x_1, z_2(x_0-1)^{-1}, z_3(x_0-1)^{-1}),\\
C_2:&(s_1,t_1,u_1,v_1)=(0,P,Q,0) \\
& \quad \leftarrow (s_2,t_2,u_2,v_2):=(s_1,t_1, u_1, v_1s_1^{-1}),\\
C_3:&(s_2,t_2,u_2,v_2)=(0,P,-h(1+h P)^{-1},Q) \\
& \quad \leftarrow (s_3,t_3,u_3,v_3):=(s_2,t_2, (u_2+h(1+h t_2)^{-1}) s_2^{-1}, v_2),\\
C_4:&(s_3,t_3,u_3,v_3)=(0,P,Q,(1+h P)^{-1}) \\
& \quad \leftarrow (s_4,t_4,u_4,v_4):=(s_3,t_3, u_3, (v_3-(1+h t_3)^{-1}) s_3^{-1}),\\
C_5:&(s_4,t_4,u_4,v_4)=(0,P,Q,(1+h P)^{-2}) \\
& \quad \leftarrow (s_5,t_5,u_5,v_5):=(s_4,t_4, u_4, (v_4-(1+h t_4)^{-2}) s_4^{-1}),
\end{align*}
where only one of the coordinate systems is written for each blowup.
Similarly, we can resolute the singularity around $V_3$ by the following five blowups:
\begin{align*}
C_6:&(z_0,z_1,x_2,x_3)=(0,0,1,P) \\
& \quad \leftarrow (s_6,t_6,u_6,v_6):=(x_2-1,x_3, z_0(x_2-1)^{-1}, z_1(x_2-1)^{-1}),\\
C_7:&(s_6,t_6,u_6,v_6)=(0,P,Q,0) \\
& \quad \leftarrow (s_7,t_7,u_7,v_7):=(s_6,t_6, u_6, v_6s_6^{-1}),\\
C_8:&(s_7,t_7,u_7,v_7)=(0,P,-h(1+h P)^{-1},Q) \\
& \quad \leftarrow (s_8,t_8,u_8,v_8):=(s_7,t_7, (u_7+h(1+h t_7)^{-1}) s_7^{-1}, v_7),\\
C_9:&(s_8,t_8,u_8,v_8)=(0,P,Q,(1+h P)^{-1}) \\
& \quad \leftarrow (s_9,t_9,u_9,v_9):=(s_8,t_8, u_8, (v_8-(1+h t_8)^{-1}) s_8^{-1}),\\
C_{10}:&(s_9,t_9,u_9,v_9)=(0,P,Q,(1+h P)^{-2}) \\
& \quad \leftarrow (s_{10},t_{10},u_{10},v_{10}):=(s_9,t_9, u_9, (v_9-(1+h t_9)^{-2}) s_9^{-1}).
\end{align*} 
We need three blowups for $V_4$:
\begin{align*}
C_{11}:&(z_0,z_1,z_2,z_3)=(0,0,0,0) \\
& \quad \leftarrow (s_{11},t_{11},u_{11},v_{11}):=(z_0,z_1z_0^{-1}, z_2z_0^{-1}, z_3z_0^{-1}),\\
C_{12}:&(s_{11},t_{11},u_{11},v_{11})=(P,0,1,0) \\
& \quad \leftarrow (s_{12},t_{12},u_{12},v_{12}):=(s_{11},t_{11},(u_{11}-1)t_{11}^{-1}, v_{11}t_{11}^{-1}),\\
C_{13}:&(s_{12},t_{12},u_{12},v_{12})=(P,0,Q,-1) \\
& \quad \leftarrow(s_{13},t_{13},u_{13},v_{13}):=(s_{12},t_{12},u_{12},(v_{12}+1)t_{12}^{-1}),
\end{align*}
where $C_{11}$ corresponds to $V_2$, while $C_{12}$ and $C_{13}$ corresponds to $V_4$. We need additional four blowups for $V_2$:
\begin{align*}
C_{14}:&(s_{13},t_{13},u_{13},v_{13})=(0,0,1+h,0) \\
& \quad \leftarrow (s_{14},t_{14},u_{14},v_{14}):=(s_{13}t_{13}^{-1},t_{13},(u_{13}-1-h)t_{13}^{-1},v_{13}t_{13}^{-1}),\\
C_{15}:&(s_{14},t_{14},u_{14},v_{14})=(P,0,-2Q-Ph^{-1} , Q) \\
& \quad \leftarrow (s_{15},t_{15},u_{15},v_{15}):= (s_{14},t_{14}, v_{14} , (u_{14}+2v_{14}+s_{14}h^{-1} )t_{14}^{-1}),\\
C_{16}:&(s_{15},t_{15},u_{15},v_{15})=(P,0, -Ph^{-1} , Q) \\
& \quad \leftarrow (s_{16},t_{16},u_{16},v_{16}):= (s_{15},t_{15},(u_{15}+s_{15}h^{-1})t_{15}^{-1}, v_{15}),\\
C_{17}:&(s_{16},t_{16},u_{16},v_{16})=(P,0,Q, 2^{-1}Q+(1+h)h^{-1} P) \\
&\hspace{-1cm} \quad \leftarrow (s_{17},t_{17},u_{17},v_{17}):= (s_{16},t_{16}, u_{16} , (v_{16}-2^{-1}u_{16}-(1+h)h^{-1}  s_{16})t_{16}^{-1}).
\end{align*}
The (total transform of) exceptional divisor $E_i$ of $i$-th blowup is described in the local chart as
\begin{align*}
E_i:&\ s_i=0, \quad (i=1,2,3,4,5,6,7,8,9,10,11,14)\\
E_i:&\ t_i=0, \quad (i=12,13,15,16,17).      
\end{align*}

Let us denote the total transform (with respect to blowups) of the divisors (hyper-surfaces) $c_0x_0+c_1x_1+a=0$ and $c_2x_2+c_3x_3+b=0$ by $H_a$ and $H_b$ respectively, where $(c_0:c_1:a)$ and  $(c_2:c_3:b)$ are constant  ${\mathbb P}^2$ vectors.  Let us write the classes of $H_a$, $H_b$ and $E_i$ modulo linear equivalence as ${\mathcal H}_a$, ${\mathcal H}_b$ and ${\mathcal E}_i$. Then, the Picard group of this variety ${\mathcal X}$ becomes a ${\mathbb Z}$-module:
\begin{align}
\pic({\mathcal X}) =& \Z \H_a\oplus \Z\H_b \oplus \bigoplus_{i=1}^{17} \Z \E_i .
\end{align}

\begin{theorem}
The map \eqref{4d} is lifted to an algebraically stable map on the rational variety
obtained by blow-ups along $C_i$, $i=1,2,\dots,17$, from $\P^2\times \P^2$.
\end{theorem}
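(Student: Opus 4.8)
The plan is to verify directly the criterion \eqref{cond_AS}: the lift of $\varphi$ to $\mathcal X$ is algebraically stable precisely when no prime divisor $D\subset\mathcal X$ satisfies $\varphi(D\setminus I(\varphi))\subset I(\varphi^k)$ for some $k\ge 1$. Since $\mathcal X$ is smooth and $\varphi$ is birational, $I(\varphi^k)$ has codimension at least two for every $k$; hence any prime divisor that $\varphi$ does \emph{not} contract has image a divisor and automatically cannot violate \eqref{cond_AS}. The whole problem therefore reduces to controlling the forward $\varphi$-orbits of the finitely many prime divisors that $\varphi$ contracts, and showing none of those orbits is ever swallowed into $I(\varphi)$.

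First I would pin down, in the nine affine charts of $\P^2\times\P^2$ and in the blow-up charts attached to $C_1,\dots,C_{17}$, the local expression of the lifted map, and from it read off $I(\varphi)$ and the list of contracted prime divisors. On $\P^2\times\P^2$ the analysis behind Sequences \eqref{seq1} and \eqref{seq4} shows that the hypersurfaces contracted by $\varphi$ are the strict transforms of $\{x_2=1\}$, contracted onto $V_1$, and of $\{x_2=\infty\}$, contracted onto $V_4\supset V_2$; in addition one must determine which of the exceptional divisors $\mathcal E_i$ are contracted by the lifted map. This step turns the heuristic ``singularity series'' of \eqref{seq1} and \eqref{seq4} into precise statements about divisors on $\mathcal X$, and it also checks that no blow-up in the tower creates a fresh contracted hypersurface.

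The heart of the argument is then to verify, chart by chart in the coordinates of $C_1,\dots,C_{17}$, that the forward $\varphi$-orbit of every contracted divisor reaches the strict transform of a non-exceptional hyperplane $\{c_0x_0+c_1x_1+a=0\}$ or $\{c_2x_2+c_3x_3+b=0\}$ after finitely many steps, travelling only through exceptional divisors $\mathcal E_i$ in between and never being contained in $I(\varphi)$. Concretely, the five blow-ups over $V_1$, the five over $V_3$, and the seven over $V_2$ and $V_4$ are chosen so that the confined Sequence \eqref{seq1} lifts to a chain of divisors $\{x_2=1\}\to\mathcal E_{i_1}\to\cdots\to\mathcal E_{i_m}\to\{x_0=1\}$ with no contraction at any step, while the cyclic Sequence \eqref{seq4} lifts to a genuine cycle of divisors through $\mathcal E_{11},\dots,\mathcal E_{17}$ and the hyperplanes at infinity. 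Equivalently, one computes the induced action on $\pic(\mathcal X)=\Z\H_a\oplus\Z\H_b\oplus\bigoplus_{i=1}^{17}\Z\E_i$ and checks that each generator is sent to the class of an honest prime divisor, i.e.\ that nothing is ``lost'' to codimension $\ge 2$ in a way that terminates in $I(\varphi)$.

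Granting this, the conclusion is immediate: for each contracted $D$ one has $\varphi^{j}(D)$ a prime divisor for some $j\ge 1$ with $\varphi^{i}(D)\not\subset I(\varphi)$ for $0<i<j$; since a point lies in $I(\varphi^k)$ only when some forward iterate $\varphi^{i}$, $0\le i<k$, is defined at it and sends it into $I(\varphi)$, the inclusion $\varphi(D\setminus I(\varphi))\subset I(\varphi^k)$ fails for every $k$, and with the non-contracted case this rules out \eqref{cond_AS}. The step I expect to be the main obstacle is the middle one: the explicit computations in the seventeen blow-up charts are long, and the outcome is genuinely sensitive to the choice and ordering of the centres — the centres $V_2\subset V_4$ are nested and infinitely near points appear — so the real content is proving that this particular tower of blow-ups converts every contracted hypersurface into a chain or cycle of divisors without introducing a new, unresolved contraction, which is exactly the ``trial and error'' point flagged above.
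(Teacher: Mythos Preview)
Your plan rests on the expectation that the seventeen blow-ups ``fully confine'' every contracted hypersurface, so that the forward orbit of each contracted divisor eventually returns to a genuine divisor (your chain $\{x_2=1\}\to\mathcal E_{i_1}\to\cdots\to\{x_0=1\}$ and the cycle through $\mathcal E_{11},\dots,\mathcal E_{17}$). That expectation is false, and it is exactly why the paper stresses throughout that System~\eqref{4d} does \emph{not} satisfy singularity confinement and can only be lifted to an algebraically stable map, not a pseudo-automorphism. On $\mathcal X$ the lifted map still contracts a divisor: the proper transform of the first exceptional divisor, of class $\E_1-\E_2$, is sent by $\varphi$ to a two-dimensional subvariety of $E_{11}$ and never re-expands to a divisor. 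So the key step you propose --- ``for each contracted $D$ one has $\varphi^{j}(D)$ a prime divisor for some $j\ge 1$'' --- simply does not hold, and the argument as written would not close.

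What the paper actually does is different and more delicate. It identifies $I(\varphi)=\varphi^{-1}(E_6-E_7)\subset E_{11}$ and shows that $D=E_1-E_2$ is the \emph{only} divisor whose $\varphi$-image drops to dimension at most two, with $\varphi(E_1-E_2)\subset E_{11}$. Algebraic stability is then obtained not by resolving this contraction but by verifying that the two-dimensional subvariety $\varphi(E_1-E_2)$ and the two-dimensional loci $I(\varphi^k)$, $k\ge 1$, are \emph{distinct} subvarieties of $E_{11}$, so that \eqref{cond_AS} never occurs. In other words, the contracted orbit is allowed to wander forever inside $E_{11}$ in codimension two; what matters is that it avoids $I(\varphi)$ at every step. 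Your outline would need to be reworked to (i) single out $E_1-E_2$ as the unique contracted prime divisor on $\mathcal X$, and (ii) replace the ``returns to a divisor'' claim by the comparison $\varphi(E_1-E_2)\neq I(\varphi^k)$ inside $E_{11}$ for all $k$.
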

\begin{proof} The algebraic stability can be checked as follows.
In the present case, the indeterminate set $I(\varphi)$ is given by
$$I(\varphi)=\varphi^{-1}(E_6-E_7)\subset E_{11},$$
while the condition that the dimension of $\varphi(D\setminus I(\varphi))$ is at most two
implies $D=E_1-E_2$ and $\varphi(D\setminus I(\varphi))=\varphi(E_1-E_2)\subset E_{11}$.   
It can be checked that $\varphi(E_1-E_2)$ and $I(\varphi^k)$, $k=1,2,3,\dots$,
are different two-dimensional subvarirties in $E_{11}$, and hence \eqref{cond_AS} can not occur.
\end{proof}

The class of proper transform of 
$E_i$ is 
\begin{align*}
&\E_i-\E_{i+1}\quad  (i=1,2,3,4,6,7,8,9,12,13,14,15,16)\\
&\E_i \quad (i=5,10,17), \quad \E_{11}-\E_{15}.
\end{align*}  
Since the defining function of the hyper-surface $z_1=0$ takes zero with multiplicities\\ 
$0, 0, 0, 0,0, 1,2, 2, 2, 2, 1, 1, 1, 2, 2, 2, 2$ on $E_i$ ($i=1,\dots,17$), 
it is decomposed as
\begin{align*}
\H_a=& \mbox{Proper transform} \\
&+ (\E_6-\E_7)+2(\E_7-\E_8)+2(\E_8-\E_9)+2(\E_9-\E_{10})+2\E_{10}\\ 
& +(\E_{11}-\E_{14})+(\E_{12}-\E_{13})+(\E_{13}-\E_{14})+2(\E_{14}-\E_{15})
\\&+2(\E_{15}-\E_{16})+2(\E_{16}-\E_{17})+2\E_{17},
\end{align*}
where each class enclosed in parentheses determines a prime divisor uniquely
(we called such a class deterministic \cite{CDT2017}). 
Hence the class of its proper transform is $\H_a-\E_6-\E_7-\E_{11}-\E_{12}$.
Similarly, the defining function of the hyper-surface $x_2-1=0$ takes zero with multiplicities $1,1,1, 1, 1, 1, 1, 1, 1, 1, 1, 0, 0, 1, 1, 1, 1$ on $E_i$, and therefore the class of its proper transform is $\H_b-\E_1-\E_6-\E_{11}$.
Along the same line, the proper transform of  $z_3=0$ can be computed as $\H_b-\E_1-\E_2-\E_{11}-\E_{12}$.

Using these data, we can compute the pull-back action of Mapping $\varphi$ \eqref{4d} on the Piacard group. For example, the pull-back of $E_1$ is
$(\bar{x}_1,\bar{z}_2,\bar{z}_3)=(0,0,0)$, whose ``common factor'' on each local coordinate system is $x_2-1$, $s_6$, $s_7$, $s_8$ or $s_9$. Thus, we have
\begin{align*}
\varphi(\E_1)=&(\H_2-\E_1-\E_6-\E_{11})+\sum_{i=6}^9 (\E_i-\E_{i+1})\\
=& \H_2-\E_1-\E_{10}-\E_{11}.
\end{align*}       
Along the same line, we have the following proposition.
\begin{proposition}
The pull-back $\varphi^*$ of Mapping \eqref{4d} is a linear action on the Picard group given by
\begin{align*}
&\H_a\to \H_b,\\
&\H_b\to \H_a+3\H_b-2\E_1-3\E_{11}-\E_{6,7,9,10,12,13,14}, \\ 
&\E_1 \to \H_b-\E_{1,10,11}, \quad
\E_2 \to \H_b-\E_{1,9,11}, \quad  \E_3 \to \H_b-\E_{1,7,9,11}+\E_8,\\ 
&\E_4 \to \H_b-\E_{1,7,11}, \quad  \E_5 \to \H_b-\E_{1,6,11},\\
&\E_6 \to \E_{14}, \quad  
\E_7 \to \E_{14}, \quad\E_8 \to \E_{15}, \quad  \E_9 \to \E_{16},
\quad  \E_{10} \to \E_{17},\\
&\E_{11} \to \E_{1,11}-\E_{14}, \quad  \E_{12} \to \H_b-\E_{1,11,13}, 
\quad  \E_{13} \to \H_b-\E_{1,11,12},\\
&\E_{14} \to \E_2, \quad  \E_{15} \to \E_3, \quad  \E_{16} \to \E_4,\quad
\E_{17} \to \E_5,
\end{align*}
where $\E_{i_1,\dots,i_k}$ denotes $\E_{i_1}+\dots+\E_{i_k}$.
The Jordan blocks of the corresponding matrix are
$$1 ,\ -1,\  1^{\frac{1}{3}} \ (3\times 3\mbox{ {\rm blocks}}),\  
\begin{bmatrix}
1&1&0\\0&1&1\\0&0&1
\end{bmatrix},\ 
\begin{bmatrix}
0&1&0&0&0\\
0&0&1&0&0\\
0&0&0&1&0\\
0&0&0&0&1\\
0&0&0&0&0
\end{bmatrix}. 
$$
In particular, the degree of the mapping $\varphi^n$ grows quadratically with respect to $n$.
\end{proposition}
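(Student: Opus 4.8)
The plan is to verify the stated $\varphi^*$ action on $\pic(\X)$ divisor class by divisor class, and then read off the asymptotic degree growth from the Jordan form of the resulting $19\times 19$ integer matrix. The computation of $\varphi^*$ is the genuine content here; the degree statement is a short corollary once the matrix is known.

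First I would compute the images $\varphi^*(\H_a)$ and $\varphi^*(\H_b)$. By definition $\varphi^*(\H_a)$ is the total transform of the pullback under $\varphi$ of a generic hyperplane $c_0 x_0 + c_1 x_1 + a = 0$; since $\overline{x_0}=x_2$ and $\overline{x_1}=x_3$, this pullback is $c_0 x_2 + c_1 x_3 + a = 0$, i.e. a generic member of the $\H_b$ system, so $\varphi^*(\H_a)=\H_b$ on the nose (no exceptional corrections, since a generic such surface misses all the centers $C_i$). For $\varphi^*(\H_b)$ one pulls back $c_2 x_2 + c_3 x_3 + b = 0$ through the last two components of $\varphi$, clears denominators $(1-x_2)^2$, and records the total degree in $(x_0:x_1)$ and $(x_2:x_3)$ together with the multiplicity of vanishing of the resulting polynomial along each center $C_1,\dots,C_{17}$ (in the appropriate chart, using the blow-up coordinates listed above). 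That bookkeeping is exactly the $\H_a+3\H_b-2\E_1-3\E_{11}-\E_6-\E_7-\E_9-\E_{10}-\E_{12}-\E_{13}-\E_{14}$ claimed. Next I would do the same for each $\E_i$: take the proper-transform description of $E_i$ (the paper already lists these classes and the multiplicity tables for $z_1=0$, $x_2-1=0$, $z_3=0$), push it forward through $\varphi$, identify the image subvariety, and decompose its total transform into deterministic classes — the worked example $\varphi^*(\E_1)=\H_b-\E_1-\E_{10}-\E_{11}$ is the template. The main obstacle is precisely this step: one must track how each of the seventeen centers, several of which are infinitely near, maps forward, and correctly separate ``which $E_j$'s appear with which multiplicity'' from spurious contributions — this is the trial-and-error part the authors allude to, and it requires care with the chart changes for the iterated blow-ups $C_2,\dots,C_5$, $C_7,\dots,C_{10}$, $C_{12},\dots,C_{17}$.

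Having assembled the $19\times 19$ matrix $M$ of $\varphi^*$ in the basis $\{\H_a,\H_b,\E_1,\dots,\E_{17}\}$, I would compute its characteristic polynomial and rational canonical / Jordan form. By the algebraic stability established in the Theorem, $(\varphi^*)^n = (\varphi^n)^*$, so $\deg \varphi^n$ is (up to bounded factors) the coefficient with which $\H_a$ or $\H_b$ appears in $M^n \H_a$, equivalently the growth rate of $\|M^n\|$ on the relevant cyclic subspace. The claimed Jordan structure — eigenvalue blocks $1$, $-1$, a triple of $3\times3$ cyclic blocks for the primitive cube roots of unity (the ``$1^{1/3}$'' notation), a single $3\times3$ Jordan block at eigenvalue $1$, and a $5\times5$ nilpotent-type block (companion matrix of $\lambda^5$, eigenvalue $0$) — has spectral radius $1$, with the largest Jordan block at an eigenvalue on the unit circle being the $3\times3$ block at $\lambda=1$. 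Hence $\|M^n\| \sim C n^{2}$, giving $\deg \varphi^n = O(n^2)$ and, since the $3\times3$ Jordan block is genuinely non-diagonalizable and its generalized eigenvectors have nonzero $\H$-component, $\deg\varphi^n$ grows exactly like $n^2$. The only subtlety to check is that the degree functional (the $\H_a,\H_b$-coordinates) is not annihilated by the projection onto the top nilpotent part of the $\lambda=1$ block; this is immediate from the explicit image of $\H_b$, which already contains $\H_a+3\H_b$, forcing a nontrivial $2\times2$-and-higher interaction. This completes the proof of quadratic growth.
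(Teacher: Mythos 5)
Your proposal follows essentially the same route as the paper: compute $\varphi^*$ generator by generator by pulling back defining equations through the explicit blow-up charts and decomposing total transforms into deterministic classes (the paper's worked example $\varphi^*(\E_1)=\H_b-\E_1-\E_{10}-\E_{11}$ is indeed the template it uses for all the others), then read off quadratic degree growth from the size-$3$ Jordan block at a unimodular eigenvalue, using algebraic stability to identify $(\varphi^*)^n$ with $(\varphi^n)^*$. Your closing remark on checking that the $\H$-components are not annihilated by the projection onto the nilpotent part of the $\lambda=1$ block is a point the paper leaves implicit, so no objection there.
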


\begin{corollary}
The degree of $\psi^n$ for  the 3-dimensional map $\psi$ \eqref{3d_system} also 
grows quadratically with respect to $n$.
\end{corollary}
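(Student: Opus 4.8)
The plan is to deduce the Corollary from the Proposition by realising $\psi$ as the restriction of $\varphi$ to an invariant hypersurface. Since $I_2$ is of degree one in $x_3$, for generic $c\in\C$ the closure $S_c$ of $\{I_2=c\}$ in $\P^2\times\P^2$ is the graph of a rational function $x_3=\rho(x_0,x_1,x_2)$ whose degree does not depend on $c$ (one only needs that the denominator $h(-x_0+hx_0+x_0^2-2x_2+2x_0x_2)$ appearing in \eqref{3d_system} is not identically zero); in particular $S_c$ is a rational threefold. As $I_2$ is invariant, $S_c$ is $\varphi$-invariant, and the projection forgetting $x_3$ conjugates $\varphi|_{S_c}$ to $\psi$. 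The upper estimate is then immediate: writing $\psi^n=\pi\circ\varphi^n\circ\iota$, where $\iota$ is the graph embedding and $\pi$ the projection, both of degree independent of $n$, and invoking $\deg\varphi^n=O(n^2)$ from the Proposition, each component of $\psi^n$ is a quotient of polynomials of degree $O(n^2)$.

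For the reverse estimate I would replay the construction above for the three-dimensional map. The singular patterns of $\psi$ are the restrictions to $S_c$ of the sequences \eqref{seq1} and \eqref{seq4}; the sub-varieties along which divisors are contracted are the traces of $V_1,\dots,V_4$ on $S_c$; and blowing these up by the analogues of $C_1,\dots,C_{17}$, discarding those whose trace on $S_c$ is empty, should — by the same indeterminacy analysis as in the proof of the Theorem — lift $\psi$ to an algebraically stable map on a rational threefold $\mathcal Y$. For such a model $\deg\psi^n$ coincides, up to a multiplicative constant, with the growth of $(\psi^*)^n$ on an ample class of $\mathcal Y$, so the bound just obtained forces the spectral radius of $\psi^*$ to be $1$, and the Corollary reduces to showing that the largest Jordan block of $\psi^*$ attached to a modulus-one eigenvalue has size $3$ rather than $2$, i.e. that $\deg\psi^n$ grows at least quadratically and not only linearly.

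This last step is where I expect the real difficulty. One way to carry it out: since $S_c$ is an invariant divisor, restriction of divisor classes from $\mathcal X$ to the proper transform of $S_c$, followed by pull-back under the extra blow-ups, intertwines $\varphi^*$ with $\psi^*$, so it suffices to check that this restriction does not annihilate a cyclic vector of the size-three $(t-1)^3$ block of $\varphi^*$ found in the Proposition; I would verify this by computing the first few iterated pull-backs of an explicit class on $\mathcal Y$ — for instance the total transform of a coordinate hyperplane, or the class of the closure of $\{x_1=\infty\}$ — and exhibiting the expected $n^2$ growth of its components. The structural reason it should succeed is that, like \eqref{4d}, the map $\psi$ is an affine skew product over the two-dimensional QRT base map: $(x_0,x_2)$ evolves by QRT, which as a translation on a rational elliptic surface contributes a unipotent block, while $\overline{x_1}=(\alpha(x_0,x_2)\,x_1+\beta(x_0,x_2))/\gamma(x_0,x_2)$ is affine in $x_1$, so that the $x_1$-coordinate of $\psi^n$ is controlled by a multiplicative cocycle over the base whose degree accumulates along the orbit; ruling out systematic cancellation in this accumulation is the analytic counterpart of the Picard-lattice computation. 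Finally, when the QRT base map $\chi$ has itself quadratically growing degrees — which is the case as long as its translation on the rational elliptic surface has infinite order — the lower bound follows at once by squeezing, $\deg\chi^n\le\deg\psi^n\le\deg\varphi^n$.
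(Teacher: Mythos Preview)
Your upper bound via the factorisation $\psi^n=\pi\circ\varphi^n\circ\iota$ is exactly the paper's argument. For the lower bound, the paper does precisely what you relegate to your final sentence: since the $(x_0,x_2)$-component of $\psi$ is the QRT base map $\chi$ (with $x_1$ playing no role in those two coordinates), one has $\deg\chi^n\le\deg\psi^n$, and the quadratic growth of $\deg\chi^n$ is cited from \cite{Takenawa2001}. That squeeze is the entire proof.

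Everything in your middle two paragraphs---building an algebraically stable threefold $\mathcal Y$ for $\psi$ by restricting the centres $C_1,\dots,C_{17}$ to $S_c$, computing $\psi^*$ on $\pic(\mathcal Y)$, and tracking the survival of the size-three Jordan block under restriction---is unnecessary for the Corollary, and you yourself flag it as ``where I expect the real difficulty''. It is also not obviously sound as stated: restricting blow-up centres to an invariant hypersurface need not produce an algebraically stable model for the restricted map (new indeterminacies can appear on $S_c$ that were invisible in codimension one on $\mathcal X$), and the intertwining of $\varphi^*$ with $\psi^*$ via restriction of divisor classes requires more care than a one-line assertion. None of this is wrong in spirit, but it is a substantial project, whereas the paper's argument is two lines. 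Recognise that your closing remark already finishes the job and drop the rest.
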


\begin{proof} \footnote{This kind of argument is not original. More general results can be found in \cite{Mase2016}, where it is shown that all the reduced systems from classical KP or BKP equation have the quadratic degree growth.}
Let us denote the initial values as $(x_0,x_1,x_2,x_3)=(x_0^{(0)},x_1^{(0)},x_2^{(0)},x_3^{(0)})$. Map $\psi^n$ is obtained by substituting $x_3=h(x_0,x_1,x_2)$ to 
$\varphi^n: x_i^{(n)} = f_i^{(n)}(x_0,x_1,x_2,x_3)$, $i=0,1,2$, where $h$ and $f_i$'s are some rational functions.
Hence the degree of $x_i^{(n)}$'s  with respect to $x_0, x_1, x_2$ are bounded from the above by $(\mbox{degree of $h$}) \times (\mbox{degree of $f_i^{(n)}$})$.
Since the degree of $f_i^{(n)}$'s are quadratic with respect to $n$, the degree of $x_i^{(n)}$'s are at most quadratic. On the other hand, since $\psi$ is a QRT map with respect to $x_0$ and $x_2$, its degree with regarding $x_1$ as a constant grows quadratically \cite{Takenawa2001}, hence the degree of $x_i^{(n)}$'s are at least quadratic.
\end{proof}

The proper transforms of the conserved quantities $I_1$ and $I_2$ are
\begin{align*}
I_1:&\ 2\H_a +2\H_b -2\E_1-2\E_6-4\E_{11}-\E_{2,4,7,9,12,13,14,16}\\
I_2:&\ 2\H_a +2\H_b -3\E_{11}-\E_{1,2,4,5,6,7,9,10,12,13,14,16,17},
\end{align*}
which are preserved by $\varphi^*$.

We can consider the inverse problem.
\begin{proposition}
Hyper-surfaces whose class is $2\H_a +2\H_b -2\E_1-2\E_6-4\E_{11}-\E_{2,4,7,9,12,13,14,16}$ are given by $C_0+C_1I_1=0$ with $(C_0:C_1)\in \P^1$ and $C_1\neq 0$.
Hyper-surfaces whose class is $2\H_a +2\H_b -3\E_{11}-\E_{1,2,4,5,6,7,9,10,12,13,14,16,17}$ are given by  $C_0+C_1I_1+C_2I_2=0$ with $(C_0:C_1:C_2)\in \P^2$ and $C_2\neq 0$.
\end{proposition}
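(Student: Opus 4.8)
The plan is to reconstruct, for each of the two classes, the complete linear system inside $H^0(\P^2\times\P^2,\mathcal O(2,2))$ and then single out the members that are honest hyper-surfaces. A hyper-surface of class $2\H_a+2\H_b-\sum_i m_i\E_i$ on $\mathcal X$ is the proper transform of the closure of some $\{f=0\}\subset\P^2\times\P^2$, and since the $\H_a$- and $\H_b$-coefficients of both our classes equal $2$, the polynomial $f$ homogenises to a bihomogeneous form $F$ of bidegree $(2,2)$; such forms span a space of dimension $\binom{4}{2}^2=36$. Reading the blow-ups off in the charts fixed above, when the prescribed class is written $2\H_a+2\H_b-\sum_i m_i\E_i$ the integer $m_i$ is exactly the multiplicity along $C_i$ demanded of the $(i-1)$-st proper transform of $\{F=0\}$, so these integers turn into linear conditions on the $36$ coefficients of $F$. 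Hence the claim for $\mathcal D_1:=2\H_a+2\H_b-2\E_1-2\E_6-4\E_{11}-\E_{2,4,7,9,12,13,14,16}$ reduces to: the subspace of $H^0(\mathcal O(2,2))$ carved out by the conditions attached to $\mathcal D_1$ is $2$-dimensional, and inside it the locus where the resulting class is exactly $\mathcal D_1$ (not strictly smaller) is the complement of a line; similarly for $\mathcal D_2:=2\H_a+2\H_b-3\E_{11}-\E_{1,2,4,5,6,7,9,10,12,13,14,16,17}$ with ``$2$'' replaced by ``$3$'' and ``line'' by ``plane''.

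For the lower bounds I would use the proper-transform computations already recorded above: the proper transform of $\{I_1=0\}$ has class exactly $\mathcal D_1$ and that of $\{I_2=0\}$ has class exactly $\mathcal D_2$, so the bidegree-$(2,2)$ homogenisations $\widetilde I_1,\widetilde I_2$ belong to the respective subspaces. The homogenisation of the constant $C_0$ is $C_0\,u^2w^2$, where $u,w$ are the homogenising coordinates of the two $\P^2$ factors; the divisor $\{u^2w^2=0\}$ is twice the sum of the two hyperplanes at infinity, and running the same local computations shows it vanishes along each $C_i$ to at least the order prescribed by $\mathcal D_1$ and by $\mathcal D_2$, so $u^2w^2$ lies in both subspaces. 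As $\widetilde I_1,\widetilde I_2,u^2w^2$ are linearly independent (only $\widetilde I_2$ involves $x_1,x_3$, and only $u^2w^2$ is supported at infinity), the $\mathcal D_1$-subspace has dimension $\ge 2$ and the $\mathcal D_2$-subspace dimension $\ge 3$.

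The reverse inequalities $h^0(\mathcal X,\mathcal D_1)\le 2$ and $h^0(\mathcal X,\mathcal D_2)\le 3$ are the heart of the proof. I would establish them by a direct finite computation: take the general bidegree-$(2,2)$ form with $36$ undetermined coefficients, push it successively through the seventeen coordinate changes $C_1,\dots,C_{17}$, and at each stage impose vanishing of the appropriate jet of the current proper transform along the current centre; the point is that this linear system collapses the coefficient space precisely onto $\langle u^2w^2,\widetilde I_1\rangle$, respectively $\langle u^2w^2,\widetilde I_1,\widetilde I_2\rangle$. (One could instead restrict $F$ to a small family of lines meeting enough of the $C_i$ and patch the restrictions.) Once the dimensions are in hand, the conclusion is bookkeeping: a hyper-surface has class exactly $\mathcal D_1$ iff its defining form has exactly, not merely at least, the prescribed multiplicity along every $C_i$, and within $\langle u^2w^2,\widetilde I_1\rangle$ the locus where some multiplicity jumps is precisely $\C\,u^2w^2$ (whose proper transform lies in a strictly smaller class), so the hyper-surfaces of class $\mathcal D_1$ are $\{C_0u^2w^2+C_1\widetilde I_1=0\}$ with $C_1\ne 0$, i.e.\ $\{C_0+C_1I_1=0\}$ with $C_1\ne 0$. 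For $\mathcal D_2$ the jump-locus inside $\langle u^2w^2,\widetilde I_1,\widetilde I_2\rangle$ is the plane $\C\,u^2w^2\oplus\C\,\widetilde I_1$ (there one falls into the $\mathcal D_1$-system or smaller), i.e.\ $C_2=0$, giving $\{C_0+C_1I_1+C_2I_2=0\}$ with $C_2\ne 0$.

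The main obstacle is the dimension count of the previous paragraph. Since each $C_i$ with $i\ge2$ lies in the exceptional divisor of an earlier blow-up, ``multiplicity along $C_i$'' is a jet condition on the $i$-th proper transform rather than on $F$ itself, and checking that the whole stack of these conditions has exactly the expected rank---that no accidental coincidence makes them dependent---means unwinding all seventeen charts; this is a lengthy linear-algebra verification over $\C(h)$ rather than a conceptual difficulty. One subtlety must be handled with care: $\widetilde I_1$ vanishes to higher order than $\widetilde I_2$ along $C_1,C_6,C_{11}$ but to lower order along $C_5,C_{10},C_{17}$, so a naive ``minimum of the two multiplicities'' argument is false for members of the $\mathcal D_2$-system; one verifies instead that the extra vanishing of $\widetilde I_1$ at $C_1$ (respectively $C_6$, $C_{11}$) propagates down the corresponding tower of blow-ups as an exceptional factor which exactly makes up the deficit at $C_5$ (respectively $C_{10}$, $C_{17}$), so that a generic member of the full three-parameter family does attain class exactly $\mathcal D_2$.
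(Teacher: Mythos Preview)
Your approach is essentially the paper's: start from a general bidegree-$(2,2)$ form with $36$ undetermined coefficients, convert the prescribed class into vanishing conditions along the tower of centres, and solve the resulting linear system. The paper is terser---it just records that the required total pull-back multiplicities along $E_1,\dots,E_{17}$ are $2,3,3,4,4,2,3,3,4,4,4,1,2,7,7,8,8$ for $\mathcal D_1$ and asserts that imposing them cuts the coefficients down to the stated pencil---while you add the organisational scaffolding of a lower bound via explicit sections, an upper bound via the rank count, and a jump-locus analysis to isolate the ``exactly this class'' condition; but the substance is the same direct linear-algebra verification the paper calls ``straightforward but tedious.''
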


Thus,  we can compute invariants by using the action of the system $\varphi$ on the Picard group. 

\begin{proof}
The proof is straightforward but tedious. For example, the defining polynomials of a curve of the class $2\H_a +2\H_b -2\E_1-2\E_6-4\E_{11}-\E_{2,4,7,9,12,13,14,16}$ can be written as
\begin{align*}
&f(x_0,x_1,x_2,x_3):=\sum_{\begin{array}{c}i_0, i_1, i_2,i_3\geq 0\\ i_0+ i_1+ i_2+i_3 \leq 2\end{array}}
a_{i_0i_1i_2i_3}x_0^{i_0}x_1^{i_1}x_2^{i_2}x_3^{i_3},\\
&z_2^2f(x_0,x_1,z_2z_3^{-1},z_3^{-1})\quad \mbox{around $E_1$},\\
&z_0^2f(z_0z_1^{-1}, z_1^{-1},x_2,x_3)\quad  \mbox{around $E_5$},\\
&z_0^2z_2^2 f(z_0z_1^{-1}, z_1^{-1},z_2z_3^{-1},z_3^{-1})\quad  \mbox{around $E_{11}$}.
\end{align*}
The coefficients are determined so that defining polynomial takes zero with multiplicity
2, 3, 3, 4, 4, 2, 3, 3, 4, 4, 4, 1, 2, 7, 7, 8, 8 on $E_i$'s; which verifies the claim.
\end{proof}

\subsection*{Acknowledgement}
ASC was supported by Program Nuccleu, PN/2019, Romanian Ministery of Education
and T.~T. was supported by the Japan Society for the
Promotion of Science, Grand-in-Aid (C) (17K05271).


\begin{thebibliography}{99}
  \setlength{\parskip}{0pt}
  \setlength{\itemsep}{0pt plus 0.3ex}


\bibitem{Bayraktar2012}
Bayraktar, T., Green currents for meromorphic maps of compact K\"ahler manifolds,  J. Geom. Anal. \textbf{23} (2013),  970--998.

\bibitem{BK2008}
Bedford, E., Kim, K., Degree growth of matrix inversion: birational maps of symmetric, cyclic matrices, Discrete, Contin. Dyn. Syst. \textbf{21} (2008), 977--1013.


\bibitem{fane} Carstea A. S., Constructing soliton solution and super-bilinear form of lattice supersymmetric KdV equation, J. Phys. A: Math. Theor. \textbf{48} (2015), 285201.



\bibitem{CDT2017}
Carstea, A. S., Dzhamay, A., Takenawa, T.,
Fiber-dependent deautonomization of integrable 2D mappings and discrete Painlev\'e equations, 
\textit{J. Phys. A} \textbf{50} (2017), 405202. 


\bibitem{CT2019} Carstea A. S., Takenawa, T., Space of initial conditions and geometry of two 4-dimensional discrete Painlev\'e equations, J. Phys. A \textbf{52} (2019), 275201. 

\bibitem{superQRT} Carstea A. S., Takenawa, T., Super-QRT and 4D-mappings reduced from the lattice super-KdV equation, J. Math. Phys. \textbf{60} (2019), 093503

\bibitem{DF2001} 
Diller, J.,  Fravre, Ch., Dynamics of bimeromorphic maps of surfaces,  Amer. J. Math. \textbf{123} (2001), 1135--1169.


\bibitem{DO1988}
Dolgachev, I., Ortland, D., Point sets in projective spaces and theta functions, \textit{Ast\'erisque} \textbf{165} (Paris, 1988 i.e. 1989).

\bibitem{FK2006}
Fordy, A. P., Kassotakis, P. G.,
Multidimensional maps of QRT type,
J. Phys. A \textbf{39} (2006), 10773--10786.

\bibitem{sc} Grammaticos, B., Ramani A. Papageorgiou V., Do integrable mappings have the Painlev\'e property?, Phys. Rev. Lett. \textbf{67} (1991), 1829--1832.

\bibitem{Gubbiotti2018} Gubbiotti, G., Joshi, N., Tran, D. T., Viallet C.-M.,
Complexity and integrability in 4D bi-rational maps with two invariants, 
preprint: \textit{arXiv:1808.04942}.  



\bibitem{Lafortune2001} Lafortune, S., Carstea, A. S., Ramani, A., Grammaticos, B.,
Ohta, Y.,
Integrable third-order mappings and their growth properties, 
Regul. Chaotic Dyn. \textbf{6} (2001), 443--448. 

\bibitem{Mase2016}
Mase, T., Investigation into the role of the Laurent property in integrability, J. Math. Phys. \textbf{57} (2016),  022703.

\bibitem{qrt} Quispel, G. R. W., Roberts, J. A. G. Thomson, C. J., Integrable mappings and soliton equations 2, Physica \textbf{D34} (1989), 183--192.


\bibitem{rgh} Ramani, A., Grammaticos, B., Hietarinta, J., Discrete versions of the Painlev\'e equations, Phys. Rev. Lett. \textbf{67} (1991), 1829--1832.



\bibitem{Takenawa2001}
Takenawa,~T.,
Algebraic entropy and the space of initial values for discrete dynamical systems,
J. Phys. A: Math. Gen. \textbf{34} (2001), 10533--10545.

\bibitem{TT2009}
Tsuda,~T., Takenawa,~T.,
Tropical representation of Weyl groups associated with certain rational varieties,
Adv. Math. \textbf{221} (2009), 936--954. 


\bibitem{liu}Xue L. L., Levi D, Liu Q. P., Supersymmetric KdV equation: Darboux transformation and discrete systems, J. Phys. A: Math. Theor. \textbf{46} (2013), 502001.

\end{thebibliography}
\end{document}